\documentclass[11pt]{amsart}
\usepackage{amsmath,amsfonts,amsthm,amssymb,amscd}

\usepackage{color}
\usepackage{enumitem}

\newtheorem{theorem}{Theorem}[section]
\newtheorem{lemma}[theorem]{Lemma}

\theoremstyle{definition}

\newtheorem{definition}[theorem]{Definition}

\newtheorem{example}[theorem]{Example}

\newcommand{\Chi}{{\raise2pt\hbox{$\chi$}}}
\newcommand{\ssm}{\smallsetminus}

\newcommand{\ra}{\rightarrow}

\newcommand{\N}{\mathbb{N}}

\newcommand{\Z}{\mathbb{Z}}

\newcommand{\fN}{{\mathfrak N}}

\DeclareMathOperator{\Min}{Min}

\DeclareMathOperator{\Sing}{Sing}

\DeclareMathOperator{\reg}{reg}

\DeclareMathOperator{\supp}{supp}

\newcommand\medcap{{\,\raise1.5pt\hbox{{$\scriptstyle{\bigcap}$}}\,}}
\newcommand\medcup{{\,\raise1.5pt\hbox{{$\scriptstyle{\bigcup}$}}\,}}
\begin{document}

\title{Semi-Complemented Commutative Group Rings}

\author[W.Wm. McGovern]{Warren Wm. McGovern}
\address{H. L. Wilkes Honors College, Florida Atlantic University, 5353 Parkside Dr., Jupiter, FL 33458}
\email{warren.mcgovern@fau.edu (corresponding author)}

\author[Y. Zhou]{Yiqiang Zhou}
\address{Department of Mathematics and Statistics, Memorial University of Newfoundland, St.John's, NL A1C 5S7, Canada}
\email{zhou@mun.ca}

\subjclass[2020]{Primary: 13F99; Secondary: 13D99.}

\keywords{}

\begin{abstract}
Recall that an element $x\in R$ is complemented if there is a $y\in R$ such that $xy=0$ and $x+y\in \reg(R)$. In a recent article \cite{bbmz}, the authors investigated those rings for which every non-nilpotent element is complemented, calling such rings {\it semi-complemented}. As the title of the current work suggests we characterize when a commutative group $RG$ is semi-complemented.

\end{abstract}
\maketitle
\thispagestyle{empty}

\section{Introduction}
Throughout, we assume that all rings are commutative with identity different than 0. We let $\reg(R)$ and $U(R)$ denote the set of regular elements and units, respectively. The nil-radical of $R$ is denoted by $\fN(R)$. Recall the following well-known concept.

\begin{definition}
Let $R$ be a ring. The element $x\in R$ is called {\bf complemented} if there is a $y\in R$ such that $xy=0$ and $x+y\in \reg(R)$. If every element of $R$ is complemented then $R$ is said to be complemented.
\end{definition}

It is a fact that every regular element is complemented and that $0$ is the only nilpotent element that is complemented. This latter fact led the authors of \cite{bbmz} to study those rings for which every non-nilpotent element is complemented, calling such rings {\bf semi-complemented}. Obviously, complemented rings are precisely the reduced semi-complemented rings. Another such class of rings are what the authors termed {\bf Property $D$}; the ring $R$ has Property $D$ if $R=\reg(R)\cup\fN(R)$. (We encourage the interested reader to read \cite{bbmz} about the history of this class of rings including other names for this condition.) We recall the interesting characterization of semi-complemented rings.

\begin{theorem}\cite[Theorem 2.21]{bbmz}
The ring $R$ is semi-complemented if and only if $R$ is complemented or has Property $D$.
\end{theorem}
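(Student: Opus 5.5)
The implication from right to left is immediate, so the work is in the converse. If $R$ is complemented then every element, in particular every non-nilpotent one, is complemented. If $R$ has Property $D$, then every non-nilpotent $x$ lies in $\reg(R)$, and $y=0$ witnesses that $x$ is complemented. For the converse, assume $R$ is semi-complemented and does not have Property $D$. Since complemented rings are exactly the reduced semi-complemented rings, it suffices to show $\fN(R)=0$. So suppose, for a contradiction, that there are a nonzero nilpotent element and a non-nilpotent zero-divisor $x$.

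First I would record a small lemma: if $r\in\reg(R)$ and $m\in\fN(R)$, then $r-m\in\reg(R)$. Indeed, $(r-m)a=0$ gives $ra=ma$, hence $r^ka=m^ka=0$ for large $k$, and so $a=0$. Applying this to a complement $y$ of $x$ (so $xy=0$ and $x+y\in\reg(R)$): since $x\notin\reg(R)$, $y$ is not nilpotent. Also $y\neq 0$ and $xy=0$, so $y$ is a zero-divisor. The roles of $x$ and $y$ are therefore symmetric, and $(x+y)^m=x^m+y^m$ for all $m$.

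Next I would normalize the nilpotent. Choose $n\neq0$ with $n^2=0$. If $nx^m=0$ for some $m$, put $a=x^m$, which is a non-nilpotent zero-divisor killed by $n$. Otherwise $nx\neq 0$ is nilpotent, squares to $0$, and satisfies $(nx)y=0$; in that case put $a=y$ and replace $n$ by $nx$. Either way we get a non-nilpotent zero-divisor $a$ and $0\neq n$ with $n^2=0=na$.

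Now apply the hypothesis to the non-nilpotent element $a+n$. This gives $z$ with $(a+n)z=0$ and $a+n+z\in\reg(R)$, and by the lemma $a+z\in\reg(R)$. From $az=-nz$ we get $(az)^2=n^2z^2=0$. Also $n(a+n+z)=nz$, so if $nz=0$ then regularity forces $n=0$, a contradiction. Hence the remaining task is to show $nz=-az=0$.

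This last step is where I expect the real obstacle. So far I only know that $az$ is nilpotent, and the argument must still use that $a$ is a zero-divisor; otherwise the Property $D$ examples show the conclusion can fail. My first attempt would be to also take a complement $w$ of $a$ itself (so $aw=0$, $w\neq0$, $a+w$ regular) and multiply the relations by $w$ and by $a+w$. The aim is to show that $az$ is killed by the regular element $(a+w)(a+n+z)$, or some similar product. If that fails, I would refine the choice of $n$ by further multiplying it by powers of $a$, $w$ or $z$, so that $n$ additionally annihilates $z$. I would then rerun the same dichotomy, that is, either some power kills $n$ or it can be absorbed into $n$.
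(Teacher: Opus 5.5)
\textbf{Gap: the converse stops one step short of the contradiction.} Several parts of your argument are correct: the easy direction, the reduction to showing $\fN(R)=0$, the lemma that a regular element minus a nilpotent is regular, and the normalization producing a non-nilpotent $a$ and $0\neq n$ with $n^2=0=na$. So is your observation that $nz=0$ would give $n(a+n+z)=0$ and hence $n=0$. But you leave $nz=0$ as an expected obstacle and only list possible strategies, so as written the converse is not proved. The paper does not prove this statement either; it quotes it from the cited source, so your argument has to stand on its own.

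\textbf{How to close it.} Your first suggestion already works, and needs less than you were aiming for. Let $w$ be a complement of $a$, so $aw=0$ and $a+w\in\reg(R)$.
\begin{enumerate}[label={\rm (\roman*)}, nolistsep]
\item $(nz)a=(na)z=0$.
\item Since $nz=-az$, also $(nz)w=-(aw)z=0$.
\item Hence $nz$ is annihilated by the regular element $a+w$ alone, so $nz=0$.
\item Then $n(a+n+z)=na+n^2+nz=0$, and regularity of $a+n+z$ gives $n=0$, the desired contradiction.
\end{enumerate}
There is no need to use the product $(a+w)(a+n+z)$. The facts $a+z\in\reg(R)$ and $(az)^2=0$ are never used.

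Your concern about still needing that $a$ is a zero-divisor is already settled: that is automatic from $na=0$ with $n\neq 0$. What the last step actually uses is that $a$ itself has a complement. With this step inserted, your argument is a complete proof.
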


Our goal here is to characterize the semi-complemented group rings. We do this in two parts. First, we characterize the complemented group rings. Second, characterize those group rings that have Property $D$.
\vspace{.2in}

Throughout, $G$ is assumed to be an abelian group written multiplicatively. We let $t(G)$ denote torsion subgroup of $G$. For $g\in t(G)$, we let $o(g)$ denote the order of $g$, and we denote by $o(G)$ the set of orders of elements in $t(G)$. (Recall that $G$ is said to be torsion-free if $t(G)=\{1_G\}$.)
$R$ denotes a commutative ring, and we use $RG$ to denote the group ring. We shall write elements of $RG$ as $\sum r_gg$ for appropriate $r_g\in R$ and $g\in G$ (with the caveat that $r_g=0$ for all but finitely mane $g\in G$).

For each prime $p\in \N$, we let $G_p=\{g\in G: o(g)=p^n$ for some $n\in\N\}$, the $p$-subgroup of $G$. We define $\supp(G)$ as the set of primes $p$ for which $G_p\neq \{1_G\}$. We let $\Sing_R(G)$ denote the subgroup of $G$ generated by those $G_p$ such that $p\in U(R)$.
For a prime $p\in\N$, we let $\fN_p(R)=\{r\in R: pr\in\fN(R)\}$.

The proofs of the following are known; see \cite{karpilovsky}.

\begin{theorem}
1) $R$ is indecomposable and $\Sing_R(G)=\{e_G\}$  if and only if $RG$ is indecomposable.

2) $RG$ is an integral domain if and only if $R$ is an integral domain and $G$ is torsion-free.

3) $RG$ is reduced if and only if $R$ is reduced and $o(G)\subseteq \reg(R)$.
\end{theorem}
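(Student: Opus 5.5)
All three parts reduce to a finitely generated subgroup. Every element of $RG$ lies in $RH$ for some finitely generated $H\le G$. Idempotents, zero divisors and nilpotents of $RH$ stay idempotents, zero divisors and nilpotents of $RG$. Conversely, if $x\in RH$ satisfies a polynomial identity in $RG$, project onto $RH$ using the coset decomposition $RG=\bigoplus_{gH} gRH$, which is an $RH$-module splitting; this shows the relevant property transfers between $RH$ and $RG$. Since $H\cong t(H)\times \Z^n$ with $t(H)$ finite, we have $RH\cong (R\,t(H))[x_1^{\pm1},\dots,x_n^{\pm1}]$. So each part becomes a statement about group rings of finite abelian groups, tensored with a Laurent polynomial ring.

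\textbf{Part 2.} If $G$ has an element $g\ne 1$ of finite order $n$, then
\[
(1-g)(1+g+\cdots+g^{n-1})=0,
\]
with both factors nonzero. Hence $G$ must be torsion-free, and $R\subseteq RG$ forces $R$ to be a domain. Conversely, for $G$ torsion-free we have $G\cong\Z^n$ in the finitely generated reduction, so $RG$ is a Laurent polynomial ring over a domain and hence a domain.

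\textbf{Part 3.} Suppose $RG$ is reduced. Then $R$ is reduced. If some $n\in o(G)$ had $nr=0$ with $r\neq 0$, take $g$ of order $n$ and set $s=r(1+g+\cdots+g^{n-1})$. Then $s^2=r^2 n(1+\cdots+g^{n-1})=0$ with $s\ne0$, a contradiction. For the converse, use that a reduced $R$ embeds in $\prod_{P}R/P$ over minimal primes $P$, and then in $\prod_P K_P$ with $K_P=\mathrm{Frac}(R/P)$. Each $n\in o(G)$ is regular in $R$, so it lies in no minimal prime (minimal primes consist of zero divisors) and is therefore invertible in $K_P$. Since $RG\hookrightarrow \prod_P K_PG$, it suffices to show $K G$ is reduced when $o(G)$ is invertible in the field $K$. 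For $t(H)$ finite of order invertible in $K$, Maschke gives $K\,t(H)$ semisimple commutative, hence a finite product of fields. Then $RH$ is a product of Laurent polynomial rings over fields, which is reduced.

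\textbf{Part 1.} Suppose $p\in U(R)$ and $G_p\neq 1$. Pick $g$ of order $p$; then $e=\frac1p(1+g+\cdots+g^{p-1})$ is a nontrivial idempotent of $RG$. A nontrivial idempotent of $R$ is also one of $RG$. Conversely, let $e=\sum r_g g$ be an idempotent in $RH$ and apply the augmentation map. Reducing modulo each prime $P$ of $R$, the image of $e$ in $(R/P)H$ is an idempotent. By the structure of idempotents of group rings of finite abelian groups over domains in characteristic $p$ (or $0$) with no invertible $p$ dividing $|t(H)|$ from $\supp$, this idempotent is $0$ or $1$. Here we use that $k\,t(H)$ is local when $|t(H)|$ is a power of $\mathrm{char}\,k$. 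We also use that Laurent extensions of rings without nontrivial idempotents have none.

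The main obstacle is glueing these local conclusions back to $e\in R$. The coefficients $r_g$ for $g\ne1$ lie in every prime, so they are nilpotent, and $e$ is an idempotent of $R$ plus a nilpotent. Idempotents lift uniquely modulo nilpotents, so $e$ equals the idempotent $r_1$ of $R$, which is $0$ or $1$ since $R$ is indecomposable. The subtle bookkeeping is handling primes $p\notin U(R)$ with $p\notin P$ for some $P$; I expect this to be where care is needed, likely by working over $R/P$ with $p$ invertible in $\mathrm{Frac}(R/P)$.
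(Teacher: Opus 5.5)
The paper gives no proof of this theorem; it only cites Karpilovsky. Your Parts 2 and 3 are correct, and so is the forward direction of Part 1. (In Part 3 you mean that $KH$, not $RH$, is a product of Laurent polynomial rings over fields.)

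The gap is the converse in Part 1. Your central claim is that for every prime $P$ of $R$ the image of $e$ in $(R/P)H$ is $0$ or $1$, read off from the structure of $(R/P)H$. As a local statement this is false. The hypothesis only says $p\notin U(R)$, and $p$ can perfectly well be a unit in $R/P$.

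\begin{itemize}
\item For $R=\Z$, $H=C_2=\langle h\rangle$ and $P=3\Z$, the ring $\mathbb{F}_3C_2\cong\mathbb{F}_3\times\mathbb{F}_3$ contains the nontrivial idempotent $2(1+h)$.
\item Even at a prime containing one of the relevant primes, locality requires $t(H)$ to be a $p$-group. For $H=C_6=\langle h\rangle$ and $P=2\Z$, the element $1+h^2+h^4$ is a nontrivial idempotent of $\mathbb{F}_2C_6$.
\end{itemize}

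So the statement that the coefficients $r_g$ with $g\neq 1$ lie in every prime is not bookkeeping. It is the whole content of the implication, and it cannot be proved prime by prime. Your suggestion of working over $R/P$ with $p$ invertible in $\mathrm{Frac}(R/P)$ cannot produce it. What is missing is an argument that combines the connectedness of $R$ with the existence of a prime containing $p$.

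Here is one way to supply it. By induction on the cyclic factors of $t(H)$, with your Laurent step finishing the job, it suffices to show: if $S$ is connected and $p\notin U(S)$, then $SC_{p^k}$ is connected.

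\begin{itemize}
\item The induction runs because $R(H'\times C_{p^k})\cong(RH')C_{p^k}$, and the augmentation $RH'\to R$ is a ring retraction. Hence $p\notin U(R)$ forces $p\notin U(RH')$.
\item Take an idempotent $e\in SC_{p^k}$. Its augmentation is an idempotent of $S$, hence $0$ or $1$. Replacing $e$ by $1-e$ if needed, assume it is $0$. (This is where the augmentation map you mention actually gets used.)
\item Then $M=e\,SC_{p^k}$ is a direct summand of the free $S$-module $SC_{p^k}$, so it is finitely generated projective.
\item Its rank $P\mapsto\dim_{\kappa(P)}\bar e\,\kappa(P)C_{p^k}$ is therefore locally constant, hence constant on the connected space $\mathrm{Spec}(S)$.
\item Since $p\notin U(S)$, there is a prime $P\ni p$. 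There the augmentation ideal of $\kappa(P)C_{p^k}$ is nilpotent, so $\bar e=0$ and the rank is $0$.
\item Hence $M=0$, that is, $e=0$.
\end{itemize}
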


\begin{theorem}\cite[Theorem 4.2]{karpilovsky}
Let $R$ be a commutative ring with identity and $G$ an abelian group.
$$\fN(RG)=\fN(R)G+<r(g-1): r\in \fN_p(R), g\in G_p \hbox{ for some } p\in \supp(G)\}.$$

\end{theorem}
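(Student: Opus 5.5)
We prove the two inclusions of
$$\fN(RG)=\fN(R)G+\langle r(g-1): r\in \fN_p(R),\ g\in G_p,\ p\in \supp(G)\rangle$$
separately. Denote the right-hand side by $I$.

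For $I\subseteq \fN(RG)$ it suffices to show that each generator is nilpotent, since $\fN(RG)$ is an ideal. Clearly $\fN(R)G\subseteq \fN(RG)$. Now take $r\in\fN_p(R)$ and $g\in G_p$ with $o(g)=p^n$. In $RG$ we have
$$(g-1)^{p^n}\equiv g^{p^n}-1=0 \pmod{pRG},$$
because the binomial coefficients $\binom{p^n}{i}$ with $0<i<p^n$ are divisible by $p$. Hence $(g-1)^{p^n}=p\,h$ for some $h\in \Z G\subseteq RG$, and therefore $(r(g-1))^{p^n}=r^{p^n-1}(pr)h$. Since $pr\in\fN(R)$, this element is nilpotent, and so is $r(g-1)$.

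For $\fN(RG)\subseteq I$, pass to $\bar R=R/\fN(R)$. We have $RG/\fN(R)G\cong \bar RG$, and $\fN(R)G\subseteq \fN(RG)$, so it suffices to prove the inclusion when $R$ is reduced. In that case $\fN_p(R)=\mathrm{Ann}_R(p)$. Any nilpotent element is supported on a finitely generated subgroup, and a nilpotent $x\in RH$ remains nilpotent in $RG$. So we may assume $G=F\times T$ with $F$ free abelian of finite rank and $T$ finite. Since $R$ is reduced, $RF$ is reduced, being an iterated Laurent polynomial ring over $R$. Moreover $RG=(RF)T$, and $\fN_p(RF)=\mathrm{Ann}_{RF}(p)=\mathrm{Ann}_R(p)F$. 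It therefore suffices to treat $G=T$ finite, with a general reduced coefficient ring, and then the generators over $RF$ expand into generators over $R$.

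The finite case is the main obstacle. Write $T=\prod_p T_p$ and induct on the number of primes. First localize: a reduced ring embeds in $\prod_{\mathfrak p}R_{\mathfrak p}/\mathfrak pR_{\mathfrak p}$, but the ideal $I$ is not obviously compatible with such an embedding. Instead, for a single prime $p$, decompose $R$ through the idempotent-free splitting $x=ex+(1-e)x$. Since $R$ is reduced, $\mathrm{Ann}(p)\cap pR=0$, and $R$ embeds into $R/\mathrm{Ann}(p)\times R/pR$, with $p$ regular in the first factor and $p=0$ in the second.

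Given a nilpotent $x=\sum r_gg\in RT$, reduce modulo $\mathrm{Ann}(p)$ for each $p\mid |T|$. Then $p$ is regular and $R/\mathrm{Ann}(p)$ is reduced, so $\frac1p$ can be adjoined, giving the reduced ring $R[\frac1p]T_p$, in which $p$ is a unit. By Theorem~1.2(3), the image of $x$ in that ring is $0$. Hence $x$ lies in the kernel of the map $RT\to (R/\mathrm{Ann}(p))T_p$ after factoring through augmentation-type projections. Concretely, I would show by counting that a nilpotent $x$ satisfies: for every $p$ and every coset sum over $T_p$, the coefficient sums $\sum_{g\in hT_p} r_g$ lie in $\mathrm{Ann}(p)$. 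Then I would write
$$x=\sum_g r_g g=\sum_{\text{cosets}}\Bigl(\sum r_g\Bigr)h+\sum r_g(g-h),$$
where $g-h=h(h^{-1}g-1)$ is split further along the prime components of $h^{-1}g$.

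Getting the coefficients of each $(g-1)$ term into the correct $\mathrm{Ann}(p)$ at the same time for all primes is the delicate bookkeeping step. I would handle it by induction on $|T|$, peeling off one Sylow subgroup at a time and using Chinese-remainder-type idempotents over $\Z[1/q]$ for the other primes $q$.
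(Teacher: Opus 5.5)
Your proof of $I\subseteq\fN(RG)$ is correct, and so are your reductions: to $R$ reduced, to $G$ finitely generated, and via $RG=(RF)T$ to a finite group $T$. (The paper gives no proof of this statement; it only quotes Karpilovsky.) The genuine gap is the finite case, which is the whole content of $\fN(RT)\subseteq I$, and the route you sketch for it does not work.

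First, your ``counting'' claim is false as stated. Take $R=\mathbb{F}_q$ and $T=C_p\times C_q=\langle g_p\rangle\times\langle g_q\rangle$ with $p\neq q$. Then $g_q-1$ is nilpotent and $\mathrm{Ann}_R(p)=0$, yet its coefficient sums over the cosets $T_p$ and $g_qT_p$ are $-1$ and $1$. What the map $RT\to(R/\mathrm{Ann}(p))[\tfrac1p]T_p$ induced by $T\to T/T_{p'}$ (with $T_{p'}=\prod_{q\neq p}T_q$) actually gives is that sums over cosets of $T_{p'}$ lie in $\mathrm{Ann}(p)$.

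Second, even these corrected conditions, imposed for all $p\mid |T|$, do not imply $x\in I$. Take $R=\Z$ and the same $T$, with $x=(1-g_p)(1-g_q)$. It satisfies every one of the conditions, but $x\neq 0$ is not nilpotent because $\Z T$ is reduced (here $I=0$). So no bookkeeping can assemble prime-by-prime information into a proof. The deferred ``induction with Chinese-remainder idempotents'' is exactly where the argument is missing.

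The missing idea is to handle all primes at once through $n=|T|$ and an orthogonal decomposition of $\mathrm{Ann}_R(n)$.

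\emph{Step 1.} For reduced $R$, the ring $R/\mathrm{Ann}_R(n)$ is reduced and $n$ is regular in it. So $(R/\mathrm{Ann}_R(n))T$ embeds in $(R/\mathrm{Ann}_R(n))[\tfrac1n]T$, which is reduced by the reducedness criterion quoted in the introduction. Hence a nilpotent $x$ has all its coefficients in $\mathrm{Ann}_R(n)$.

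\emph{Step 2.} A B\'ezout relation among the integers $m_p=\prod_{q\mid n,\,q\neq p}q$ gives $\mathrm{Ann}_R(n)=\bigoplus_{p\mid n}\mathrm{Ann}_R(p)$. Moreover $\mathrm{Ann}(p)\mathrm{Ann}(q)\subseteq\mathrm{Ann}(p)\cap\mathrm{Ann}(q)=0$ for $p\neq q$. Thus $x=\sum_p x_p$ with $x_p\in\mathrm{Ann}(p)T$ and $x_px_q=0$ for $p\neq q$, so each $x_p$ is nilpotent.

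\emph{Step 3.} Write $T=T_p\times T_{p'}$. Subtract from $x_p$ the terms $a(h-1)k$ with $a\in\mathrm{Ann}(p)$, $h\in T_p$, $k\in T_{p'}$; these lie in $I$. What remains is a nilpotent $y_p\in\mathrm{Ann}(p)T_{p'}$. Applying Step 1 to $T_{p'}$ shows its coefficients are also killed by $|T_{p'}|$, which is prime to $p$. Hence $y_p=0$ and $x_p\in I$.
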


When $G$ is torsion-free, $RG$ has the property that if $\alpha\in RG$ is a zero-divisor, then there is some non-zero $r\in R$, such that $r\alpha =0$. We shall call this property an application of McCoy's Theorem, as the proof follows from McCoy's Theorem on polynomial rings.

\vspace{.2in}

\section{Complemented Group Rings}
The purpose of this section is to classify when a commutative group ring is complemented. Now is a good time to point out that complemented rings are precisely those rings whose classical ring of quotients are von Neumann regular; $q(R)$ denotes the classical ring of quotients of $R$. We give some other useful characterizations. Recall that $\Min(R)$ denotes the space of minimal prime ideals of $R$ equipped with the hull-kernel topology (aka the Zariski topology). For more information as well as proofs of the above see \cite[Theorem 4.5]{huckaba}.

We restate the important classification.

\begin{theorem}\cite[Theorem 4.5]{huckaba}
Let $R$ be a reduced ring. The following are equivalent.
\begin{enumerate}[label={\rm \arabic*.}, nolistsep]
\item $R$ is complemented.
\item $q(R)$ is von Neumann regular.
\item $\Min(R)$ is compact and $R$ satisfies Property $A$.
\item $\Min(R)$ is compact and $R$ satisfies the annihilator condition.
\end{enumerate}
\end{theorem}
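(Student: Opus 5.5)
The plan is to prove the cycle $1\Rightarrow 2\Rightarrow 4\Rightarrow 3\Rightarrow 1$ for a reduced ring $R$. Here Property $A$ means that every finitely generated ideal consisting of zero-divisors has a nonzero annihilator. The annihilator condition means that for all $a,b\in R$ there is $c\in R$ with $\operatorname{Ann}(c)=\operatorname{Ann}(a)\cap\operatorname{Ann}(b)$, or equivalently $\operatorname{Ann}(c)=\operatorname{Ann}(a,b)$. Throughout I use the standard facts about minimal primes of a reduced ring. A prime $P$ is minimal if and only if for every $x\in P$ there is $y\notin P$ with $xy=0$. Moreover, $\operatorname{Ann}(x)=\bigcap\{P\in\Min(R): x\notin P\}$. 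The sets $V(x)\cap\Min(R)$ are clopen in $\Min(R)$, with complement $V(\operatorname{Ann}(x))\cap\Min(R)$.

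\textbf{$1\Rightarrow 2$.} Take $x/s\in q(R)$ and choose $y$ with $xy=0$ and $x+y=t\in\reg(R)$. Then $x^2=xt$, so $e=x/t$ is idempotent in $q(R)$, and $x/s=e\cdot(t/s)$. I claim $u=t/s\cdot e+(1-e)$ is a unit of $q(R)$. Indeed $(1-e)=y/t$, and $u=(x^2/s+y)/t$ multiplied out; it suffices to check that $x^2+sy$ is regular. This holds since $xy=0$ and the ring is reduced: if $(x^2+sy)z=0$, then multiplying by $x$ and by $y$ gives $x^3z=0$ and $sy^2z=0$, hence $xz=0$ and $yz=0$, so $tz=0$ and $z=0$. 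Thus $x/s=eu$ with $e$ idempotent and $u$ a unit, which means $q(R)$ is von Neumann regular.

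\textbf{$2\Rightarrow 4$.} Compactness of $\Min(R)$ follows because $\Min(R)\cong\Min(q(R))=\operatorname{Spec}(q(R))$, and the spectrum of a von Neumann regular ring is compact. For the annihilator condition, in $q(R)$ write $a=e u$ and $b=f v$ with $e,f$ idempotents. Then $e+f-ef$ generates the same annihilator as $(a,b)$. Clearing denominators, $c=(a^2+b^2)\cdot$(suitable regular element) won't quite work, so instead I take $c=as+bt\cdots$. The simplest choice is $c=a+bw$, where $w\in R$ is a numerator of $(1-e)$. Since $\operatorname{Ann}_R$ is the contraction of $\operatorname{Ann}_{q(R)}$, this gives $\operatorname{Ann}(c)=\operatorname{Ann}(a,b)$.

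\textbf{$4\Rightarrow 3$.} Let $I=(a_1,\dots,a_n)$ consist of zero-divisors. Iterating the annihilator condition gives $c$ with $\operatorname{Ann}(c)=\operatorname{Ann}(I)$. Suppose $\operatorname{Ann}(I)=0$; then $c$ is regular. I then want to show $c\in I$ up to radical, or more directly, to find a regular element of $I$. For this I use compactness. The sets $U_i=\Min(R)\setminus V(a_i)$ cover $\Min(R)$, since $c\notin P$ for all minimal $P$ and $\operatorname{Ann}(c)=\bigcap_i\operatorname{Ann}(a_i)$. Compactness together with the clopen-ness of the $U_i$ lets me disjointify them as $W_i\subseteq U_i$. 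Each $W_i$ is of the form $V(\operatorname{Ann}(b_i))$; I would realize it by choosing $b_i$ using the annihilator condition, with $b_i\in\operatorname{Ann}(a_j)$-type multipliers. The element $\sum a_i b_i\in I$ then lies in no minimal prime, hence is regular. This contradicts $I$ consisting of zero-divisors.

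\textbf{$3\Rightarrow 1$.} Given $x$, the set $V(x)\cap\Min(R)$ is clopen, so its complement $V(\operatorname{Ann}(x))\cap\Min(R)$ is open and compact. It is therefore covered by finitely many basic sets $\Min(R)\setminus V(y_j)$ with $y_j\in\operatorname{Ann}(x)$. Setting $I=(x,y_1,\dots,y_k)$, the annihilator $\operatorname{Ann}(I)$ is contained in every minimal prime, hence is $0$. By Property $A$, $I$ contains a regular element $t=rx+\sum s_jy_j$. Then $y=\sum s_jy_j$ satisfies $xy=0$. Finally $x+y$ is regular: it avoids every minimal prime, because $x$ and $y$ cannot both lie in a minimal $P$, else $t\in P$.

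\textbf{Main obstacle.} The hard step is $4\Rightarrow 3$, namely manufacturing the disjointifying multipliers $b_i$ from the annihilator condition together with compactness.
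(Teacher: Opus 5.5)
The paper gives no proof of this statement (it is quoted from Huckaba), so your proposal has to stand on its own. Most of it does. Your $1\Rightarrow 2$ is correct. Your $2\Rightarrow 4$ is also correct, provided you actually verify $\operatorname{Ann}_{q(R)}(a+bw)=\operatorname{Ann}_{q(R)}(a,b)$: write $w=r(1-e)$ with $r\in\reg(R)$, and multiply a relation $z(eu+rfv(1-e))=0$ by $e$. In $3\Rightarrow 1$ there is a slip, writing $D(y)=\Min(R)\setminus V(y)$. The set you must cover by finitely many $D(y_j)$ with $y_j\in\operatorname{Ann}(x)$ is $V(x)\cap\Min(R)$ itself, which is clopen and hence compact. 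It is not its complement $V(\operatorname{Ann}(x))\cap\Min(R)=D(x)$, which meets no $D(y)$ with $xy=0$. With that correction the argument works.

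The genuine gap is $4\Rightarrow 3$, which is the only place condition 4 is ever used. The sentence ``each $W_i$ is of the form $V(\operatorname{Ann}(b_i))$; I would realize it by choosing $b_i$ \ldots'' is exactly the claim that needs proof, and nothing you wrote establishes it. What you need is that the sets $D(b)$, $b\in R$, are closed under complement in $\Min(R)$.

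The missing lemma is: for each $a\in R$ there is $a'\in R$ with $D(a')=V(a)\cap\Min(R)$. To prove it, note that $V(a)\cap\Min(R)=\bigcup_{z\in\operatorname{Ann}(a)}D(z)$ is closed in the compact space $\Min(R)$. So finitely many $z_1,\dots,z_k\in\operatorname{Ann}(a)$ suffice. Iterating the annihilator condition gives $a'$ with $\operatorname{Ann}(a')=\bigcap_k\operatorname{Ann}(z_k)$. For minimal $P$ you have $a'\notin P\iff \operatorname{Ann}(a')\subseteq P\iff$ some $\operatorname{Ann}(z_k)\subseteq P\iff$ some $z_k\notin P$. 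Hence $D(a')=\bigcup_k D(z_k)=V(a)\cap\Min(R)$.

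With this lemma, take $b_i=a_1'\cdots a_{i-1}'$, so that $D(a_ib_i)=W_i$. Then $\sum a_ib_i\in I$ avoids every minimal prime: take $i$ least with $a_i\notin P$.

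Notice, however, that the lemma already gives $aa'=0$ and $a+a'\in\reg(R)$, which is $4\Rightarrow 1$ directly. So a cleaner organization is:
\begin{itemize}
\item $1\Rightarrow 2\Rightarrow 4\Rightarrow 1$;
\item $2\Rightarrow 3$, using compactness as in your $2\Rightarrow 4$ for the compactness part;
\item your corrected $3\Rightarrow 1$.
\end{itemize}
For $2\Rightarrow 3$: in the von Neumann regular ring $q(R)$ one has $Iq(R)=eq(R)$ with $e$ idempotent. A numerator of $1-e$ lies in $\operatorname{Ann}_R(I)=0$, which forces $e=1$. Clearing denominators in $1\in Iq(R)$ then produces a regular element of $I$.
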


\vspace{.2in}

In \cite[Theorem  2.3]{gs}, the authors give sufficient conditions for when $RG$ is complemented. Specifically, the authors use the property that every $n\in o(G)$ is a unit in $R$; {\it $R$ is uniquely divisible by every order of an element in $G$}. However, their proof only uses that each $n\in o(G)$ is regular in $R$ and thus a unit in $q(R)$. The containment $o(G)\subseteq \reg(R)$ means that every $n\in o(G)$ is regular in $R$. Observe that $o(G)\subseteq \reg(R)$ if and only if $\supp(G)\subseteq \reg(R)$. Notice that if $G$ is torsion-free, then $o(G)=\{1\}$. Thus, Theorem 2.3 of \cite{gs} states and proves that if $o(G)\subseteq \reg(R)$ and $R$ is complemented, then $RG$ is complemented.

We prove the following two theorems. Together, these provide a complete classification of when $RG$ is complemented.
\vspace{.2in}

\begin{theorem}\label{Thm1}
If $G$ is a torsion abelian group, then $RG$ is complemented if and only if $R$ is complemented and $o(G)\subseteq \reg(R)$.
\end{theorem}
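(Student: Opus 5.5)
Sufficiency is already available. If $R$ is complemented and $o(G)\subseteq \reg(R)$, then \cite[Theorem 2.3]{gs}, in the strengthened form noted above (only regularity of each $n\in o(G)$ is used), gives that $RG$ is complemented. So the work is in the converse. Assume $RG$ is complemented, with $G$ torsion. Complemented rings are reduced, so part 3 of the theorem quoted from \cite{karpilovsky} immediately gives that $R$ is reduced and $o(G)\subseteq \reg(R)$. It remains to show that $R$ itself is complemented.

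To do this, take $r\in R$ and choose $\beta=\sum s_g g\in RG$ with $r\beta=0$ and $r+\beta\in\reg(RG)$. Consider the augmentation map $\varepsilon:RG\to R$, $\sum s_g g\mapsto \sum s_g$, and set $s=\varepsilon(\beta)$. Then $rs=\varepsilon(r\beta)=0$, since $r\beta=0$ forces $rs_g=0$ for every $g$. The claim is that $r+s\in\reg(R)$.

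Suppose $t(r+s)=0$ with $t\in R$. We want a nonzero element of $RG$ annihilating $r+\beta$, and the natural candidate uses the augmentation idempotent. Since the support of $\beta$ generates a finite subgroup $H\le G$ (because $G$ is torsion), put $n=|H|$, which is a product of elements of $\supp(G)$ and hence regular in $R$. Then $\sigma=\sum_{h\in H}h$ satisfies $\sigma\gamma=\varepsilon(\gamma)\sigma$ for every $\gamma\in RH$. Hence
$$t\sigma(r+\beta)=t(r+s)\sigma=0.$$
Because $r+\beta$ is regular in $RG$, this forces $t\sigma=0$, and so $t=0$. Therefore $r+s\in\reg(R)$, and $R$ is complemented.

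The only step needing care is choosing $H$ finite so that $\sigma$ lies in $RG$; this is exactly where the hypothesis that $G$ is torsion enters. Checking that $\sigma\gamma=\varepsilon(\gamma)\sigma$ is routine. Notice that the argument that $R$ is complemented does not even use regularity of $n$; that hypothesis is derived separately from reducedness via part 3 of the \cite{karpilovsky} theorem.
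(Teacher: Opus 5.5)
Your proof is correct and uses the same idea as the paper. The paper proves the augmentation/norm-element trick (Lemma~\ref{Lem1}) only for a finite cyclic group, then decomposes the finite subgroup $H$ generated by the support of the complement into cyclic factors and applies the lemma iteratively; you apply the trick once, with $\sigma=\sum_{h\in H}h$ for the whole of $H$, and your identity $t\sigma(r+\beta)=t(r+s)\sigma$ also makes the reducedness step in Lemma~\ref{Lem1} (splitting $c(a+x)=0$ into $ca=0=cx$) unnecessary.
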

\vspace{.1in}

\begin{theorem}\label{Thm2}
If $G$ is an abelian group which is not a torsion group, then $RG$ is complemented if and only if $R$ is reduced with compact minimal prime spectrum and $o(G)\subseteq \reg(R)$.
\end{theorem}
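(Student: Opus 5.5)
The forward direction should not need the hypothesis that $G$ is non-torsion. The converse should use an element of infinite order to supply Property $A$, since $R$ itself may lack it. Throughout I use the criterion of \cite[Theorem 4.5]{huckaba}, together with the Matlis-type criterion for reduced rings: $\Min(A)$ is compact if and only if for every $a\in A$ there is a finitely generated ideal $I\subseteq \operatorname{Ann}(a)$ with $\operatorname{Ann}(aA+I)=0$.

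\emph{Necessity.} Suppose $RG$ is complemented. Then $RG$ is reduced, so by part 3) of the theorem quoted from \cite{karpilovsky}, $R$ is reduced and $o(G)\subseteq\reg(R)$. For compactness of $\Min(R)$, take $a\in R$ and a complement $\beta=\sum b_g g\in RG$ of $a$ in $RG$. From $a\beta=0$ we get $ab_g=0$ for every $g$. Let $I$ be the ideal generated by the finitely many $b_g$, so $I\subseteq\operatorname{Ann}(a)$ and $I$ is finitely generated. If $r\in R$ kills $a$ and every $b_g$, then $r(a+\beta)=0$; since $a+\beta$ is regular, $r=0$. Thus $\operatorname{Ann}(aR+I)=0$, and the criterion gives that $\Min(R)$ is compact.

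\emph{Sufficiency.} Fix $g_0\in G$ of infinite order. Given $\alpha\in RG$, choose a finitely generated subgroup $K\le G$ containing $g_0$ and the support of $\alpha$. Then $K=T\times\langle x_1\rangle\times\cdots\times\langle x_m\rangle$ with $T$ finite and $m\ge 1$, and $RK\cong (RT)[x_1^{\pm1},\dots,x_m^{\pm1}]$.

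I plan to show that $RK$ is complemented and then lift complements to $RG$. The lifting step is formal. $RG$ is a free $RK$-module on a transversal of $K$ in $G$, so an element regular in $RK$ remains regular in $RG$. Hence a complement $\beta$ of $\alpha$ in $RK$ is also a complement in $RG$.

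To show $RK$ is complemented I will proceed in two steps:
\begin{enumerate}[label={\rm (\roman*)}, nolistsep]
\item Put $A=RT$. By part 3) of the quoted theorem, $A$ is reduced because $o(T)\subseteq\reg(R)$. The real task is to show that $\Min(A)$ is compact.
\item Show that a Laurent polynomial ring in $m\ge1$ variables over a reduced ring $A$ with $\Min(A)$ compact is complemented. For this I use the standard facts from \cite{huckaba}: polynomial (and Laurent) extensions always satisfy Property $A$, by McCoy's Theorem and Dedekind--Mertens content arguments. Moreover $\Min(A[x])$ is compact exactly when $\Min(A)$ is. Then condition 3 of \cite[Theorem 4.5]{huckaba} applies. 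Passing from polynomials to Laurent polynomials is a localization at the units $x_i$, which preserves both properties.
\end{enumerate}
This is where the infinite-order element is used: it guarantees $m\ge1$, so Property $A$ comes for free even though $R$ may lack it.

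The main obstacle is (i), compactness of $\Min(RT)$ for finite $T$ with $|T|$ regular. My first attempt is to verify the Matlis criterion directly for $\alpha\in RT$. Every minimal prime of $RT$ contracts to a minimal prime $P$ of $R$, because $RT$ is a finite free, hence integral and flat, extension. Over the residue field $k(P)$, in which $|T|$ is invertible, $k(P)T$ is a finite product of fields via the standard idempotents. So $\alpha$ lies in a given minimal prime over $P$ exactly when certain character-type combinations of the coefficients of $\alpha$ lie in $P$. These combinations are taken in $R[\zeta]$ with $|T|$ inverted, where $\zeta$ is a primitive root of unity of order $\exp(T)$. I would then use compactness of $\Min(R)$, through the finitely generated annihilator witnesses for these finitely many coefficient expressions, to assemble a finitely generated ideal of $RT$ of the required kind. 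If this direct route becomes messy, the fallback is to prove that compactness of $\Min$ passes along finite free extensions $R\subseteq A$ with $|T|$ regular. That would follow from the contraction map $\Min(A)\to\Min(R)$ being continuous, closed, and finite-to-one; the key point to check is closedness.
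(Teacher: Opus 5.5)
\textbf{Necessity.} Your argument is correct, and it takes a different and more elementary route than the paper. The paper gets compactness of $\Min(R)$ in two moves. First, $\Min(RG)$ is compact by Huckaba's criterion for the complemented ring $RG$. Second, contraction $\Min(RG)\to\Min(R)$ is a continuous surjection (Lemma \ref{Lem2}, which cites flatness results from \cite{picavet} and \cite{bdm}). You instead read the finitely generated ideal $I\subseteq\operatorname{Ann}(a)$ off the coefficients of a complement and invoke the Quentel--Matlis annihilator criterion. That argument is self-contained and, as you say, uses nothing about $G$.

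\textbf{Sufficiency, what works.} Your lifting step from $RK$ to $RG$ (regularity persists because $RG$ is free over $RK$) is correct. It makes precise what the paper only asserts with ``their proof can be modified''. Step (ii) is also fine.

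\textbf{The gap.} Step (i), compactness of $\Min(RT)$, is not proved. You give a vague character-theoretic plan and a fallback whose key point, closedness, is left unchecked. The fallback does work:
\begin{enumerate}[label={\rm (\alph*)}, nolistsep]
\item Since $R\subseteq RT$ is finite free, $\pi\colon\operatorname{Spec}(RT)\to\operatorname{Spec}(R)$ is closed, because $\pi(V(J))=V(J\cap R)$ by lying over.
\item $\Min(RT)=\pi^{-1}(\Min(R))$. Going down from flatness gives one inclusion, and incomparability from integrality gives the other.
\item The restriction of a closed map to a full preimage is again closed.
\item The fibres are finite.
\item A closed continuous map with compact fibres pulls compact sets back to compact sets.
\end{enumerate}

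\textbf{A simpler fix.} You do not need any of this, because the obstacle comes from the order in which you split $K$. Write instead $RK\cong BT$ with $B=R[x_1^{\pm1},\ldots,x_m^{\pm1}]$.
\begin{enumerate}[label={\rm (\alph*)}, nolistsep]
\item Your step (ii), applied to $A=R$ (reduced with $\Min(R)$ compact), shows $B$ is complemented.
\item $|T|$ stays regular in $B$, since $B$ is free over $R$.
\item The sufficiency half of Theorem \ref{Thm1}, i.e.\ \cite[Theorem 2.3]{gs}, then makes $BT$ complemented.
\end{enumerate}
This is exactly the paper's route, $RH'\cong (RH_1)H_2$. It uses the infinite-order element just as you do, to force $m\ge 1$ so that Property $A$ comes from the polynomial variables, and it never needs any statement about $\Min(RT)$.
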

\vspace{.1in}

The following lemma will be useful.

\begin{lemma}\label{Lem1}
Let $G=<g>$ be a finite cyclic group of order $n$. If the element $a\in R$ is complemented in $RG$, then the element $a\in R$ is complemented in $R$.
\end{lemma}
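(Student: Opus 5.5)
The plan is to pass from $RG$ down to $R$ using determinants. Since $G=<g>$ has order $n$, $RG$ is a free $R$-module with basis $1,g,\dots,g^{n-1}$. Multiplication by an element $\gamma=\sum_{i=0}^{n-1} c_i g^i$ is therefore given by the circulant matrix $M_\gamma=(c_{l-k})_{k,l}$, with indices taken mod $n$.

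The key tool is McCoy's theorem for matrices: a square matrix over a commutative ring is injective on $R^n$ if and only if its determinant lies in $\reg(R)$. Hence $\gamma\in\reg(RG)$ if and only if $\det M_\gamma\in\reg(R)$. I expect this translation to be the main conceptual step. It is standard, but it is exactly what lets us leave the group ring.

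Now suppose $a\in R$ is complemented in $RG$. Choose $\beta=\sum b_i g^i$ with $a\beta=0$ and $a+\beta\in\reg(RG)$. Comparing coefficients in $a\beta=0$ gives $ab_i=0$ for every $i$.

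The matrix $M_{a+\beta}$ has diagonal entries $a+b_0$ and off-diagonal entries the $b_i$. Expand $\det M_{a+\beta}$ as a signed sum of products of $n$ entries, and expand each factor $a+b_0$. Any monomial containing both $a$ and some $b_j$ vanishes, since $ab_j=0$. The only surviving monomials are therefore $a^n$, coming from the diagonal term, and the monomials involving only the $b_j$'s. These give
$$d:=\det M_{a+\beta}=a^n+y,\qquad y:=\det M_\beta\in R.$$
Each monomial of $y$ contains some $b_j$, so $ay=0$. By McCoy, $d=a^n+y\in\reg(R)$.

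It remains to show that $a+y\in\reg(R)$; then $y$ witnesses that $a$ is complemented in $R$. If $n=1$ there is nothing to do. For $n\ge 2$, suppose $(a+y)r=0$.
\begin{itemize}
\item Multiplying by $a$ and using $ay=0$ gives $a^2r=0$, hence $a^nr=0$.
\item Multiplying by $y$ gives $y^2r=0$.
\item Then $(a^n+y)(yr)=a^n yr+y^2r=0$, because $a^n y=0$ and $y^2r=0$. Regularity of $d$ gives $yr=0$.
\item Finally $(a^n+y)r=a^nr+yr=0$, so $r=0$.
\end{itemize}
Thus $a+y\in\reg(R)$ and $ay=0$, as required.
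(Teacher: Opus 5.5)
Your proof is correct, and it takes a genuinely different route from the paper's.

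The paper uses the augmentation rather than the norm. Its candidate complement is $x=b_0+\cdots+b_{n-1}$, so $ax=0$ is immediate. Regularity of $a+x$ then comes from the norm element $N=\sum_{i=0}^{n-1}g^i$, which satisfies $N\gamma=\varepsilon(\gamma)N$ for every $\gamma\in RG$, where $\varepsilon(\gamma)$ is the sum of the coefficients of $\gamma$. Thus $c(a+x)=0$ gives $cN(a+t)=0$, hence $cN=0$ and $c=0$. This avoids both McCoy's theorem and your final step passing from regularity of $a^n+y$ to regularity of $a+y$.

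As written, though, the paper splits $c(a+x)=0$ into $ca=0=cx$ by appealing to reducedness of $R$. That is not a hypothesis of the lemma. It does hold where the lemma is applied, and it can be avoided by using $cN(a+t)=c(a+x)N$ directly. Your argument never needs it.

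Your determinant argument also uses nothing about $G$ beyond $RG$ being a free $R$-module of finite rank. So it shows more generally that complemented elements of $R$ descend along any finite free ring extension. The paper's trick is specific to group rings, though it too works for any finite $G$ with $N=\sum_{h\in G}h$. So neither argument actually needs the reduction to cyclic factors in the proof of Theorem~\ref{Thm1}.

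The price of your approach is reliance on the nontrivial direction of McCoy's theorem: injectivity of $M$ forces $\det M\in\reg(R)$.
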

\vspace{.1in}

\begin{proof} \underline{{\it Lemma \ref{Lem1}.}}
Let $a\in R$ and let $G=\{e,g,g^2,\ldots, g^{n-1}\}$. By hypothesis, there is some $t\in RG$ which is a complement for $a$, that is $at=0$ and $a+t$ is regular in $RG$. Write
$$t=\sum_{i=0}^{n-1} b_i g^i.$$
Observe that $ab_i=0$ for each $i$. Set $x=b_0+\ldots +b_{n-1}$ and thus $ax=0$. We show that $a+x$ is regular in $R$. Suppose $c\in R$ and $c(a+x)=0$. Note that $ca=-cx$ and since $R$ is reduced $ca=0=cx$. Let
$$s=\sum_{i=0}^{n-1} cg^i.$$
Since $s$ is a multiple of $c$ it follows that $sa=0$. A straightforward calculation shows that
\begin{eqnarray*}
s(a+t)   &=& sa+st \\
   &=&  st \\
   &=& c (\sum_{i=0}^{n-1} xg^i) \\
   &=& 0.
\end{eqnarray*}
Since $a+t$ is regular in $RG$, it follows that $s=0$, whence $c=0$.
\end{proof}
\vspace{.1in}

We can now prove Theorem \ref{Thm1}.
\vspace{.1in}

\begin{proof} \underline{{\it Theorem \ref{Thm1}.}} We suppose that $G$ is a torsion group. If $R$ is complemented and $o(G)\subseteq \reg(R)$, then $RG$ is complemented. The proof of this is given in \cite{gs}. The main point is that if for each finitely generated subgroup $H$ of $G$, $RH$ is complemented, then so is $RG$.

Conversely, suppose that $RG$ is complemented. Then $RG$ is reduced in which case it follows that $o(G)\subseteq \reg(R)$ (\cite[Theorem 5]{connell}). Let $a\in R$. There is some $t\in RG$ such that $at=0$ and $a+t$ is regular in $RG$. Let $H$ be the subgroup of $G$ generated by the support of $t$. Then $t\in RH$ and $a$ is complemented in $RH$. Now, by hypothesis $H$ is torsion. Since it is finitely generated $H$ is a finite abelian group and therefore $H\cong H_1\times \cdots \times H_k$ where each $H_i$ is a finite cyclic group. Then,
$$RH\cong R(H_1\times \cdots \times H_k) \cong (R(H_1\times \cdots \times H_{k-1}))H_k.$$
An application of Lemma \ref{Lem1}, yields that $a$ is complemented in $R(H_1\times \cdots \times H_{k-1})$. Applying Lemma \ref{Lem1} several more times yields that $a\in R$ is complemented in $R$.
\end{proof}

\vspace{.2in}

We prove another lemma.

\begin{lemma}\label{Lem2}
Let $R$ be a reduced and $G$ an abelian group. The embedding of $R$ into $RG$ induces a continuous surjective map $\psi:\Min(RG)\ra \Min(R)$.
\end{lemma}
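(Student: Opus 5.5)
The natural map is $\psi(P)=P\cap R$ for $P\in\Min(RG)$. The plan is to check three things in turn: that $\psi$ lands in $\Min(R)$, that it is continuous, and that it is surjective.

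\textbf{Well-definedness.} First, $P\cap R$ is certainly a prime of $R$; the issue is minimality. I would use the characterization of minimal primes in a reduced ring: $P$ is minimal if and only if for every $x\in P$ there is $y\notin P$ with $xy=0$.

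If $RG$ is reduced this applies to $RG$ directly. In general, though, $RG$ need not be reduced even when $R$ is, since $o(G)$ may fail to lie in $\reg(R)$. So I would instead use the version valid in any commutative ring: $P$ is minimal if and only if for every $x\in P$ there are $y\notin P$ and $n\ge 1$ with $yx^n=0$.

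Now take $r\in P\cap R$ and choose $\beta\notin P$ with $\beta r^n=0$. Writing $\beta=\sum b_g g$, we get $b_g r^n=0$ for every $g$. Since $R$ is reduced, this gives $b_g r=0$, and then $(b_g)^m r^m=0$ for all $m$. Because $\beta\notin P$, some coefficient $b_g$ lies outside $P$: otherwise $\beta\in (P\cap R)G\subseteq P$. That $b_g$ lies outside $P\cap R$ and satisfies $b_g r=0$. Hence $P\cap R$ is minimal in $R$.

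\textbf{Continuity.} This is formal. For $r\in R$ we have
\[
\psi^{-1}\bigl(U(r)\cap\Min(R)\bigr)=\{P\in\Min(RG): r\notin P\},
\]
which is a basic open set of $\Min(RG)$. Since the sets $U(r)\cap\Min(R)$ form a basis for $\Min(R)$, $\psi$ is continuous.

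\textbf{Surjectivity.} Fix $\mathfrak p\in\Min(R)$. The extension $\mathfrak pG$ is the kernel of $RG\to(R/\mathfrak p)G$, and $\mathfrak pG\cap R=\mathfrak p$. I would then look for a minimal prime $P$ of $RG$ with $P\cap R=\mathfrak p$.

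Let $S=R\setminus\mathfrak p$, a multiplicative subset of $R$, hence of $RG$. It suffices to show that $\mathfrak pG$ is disjoint from $S$, which is clear since $\mathfrak pG\cap R=\mathfrak p$. Then some prime $Q\supseteq\mathfrak pG$ is disjoint from $S$; for instance, take a prime of $(R/\mathfrak p)G$ missing the image of $S$, which exists because $(R_\mathfrak p/\mathfrak pR_\mathfrak p)G\neq0$. Choose a minimal prime $P\subseteq Q$. Then $P\cap R\subseteq Q\cap R=\mathfrak p$, and $P\cap R$ is a prime of $R$. By minimality of $\mathfrak p$ we conclude $P\cap R=\mathfrak p$.

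The one place needing care is minimality in the well-definedness step when $RG$ is non-reduced. The nilpotent-power characterization together with the coefficientwise argument handles it.
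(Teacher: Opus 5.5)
Your proof is correct, and it reaches the result by a more self-contained route than the paper, which handles all three points by citation:
\begin{itemize}
\item well-definedness because $R\to RG$ is flat, so contractions of minimal primes are minimal (going-down);
\item continuity from a cited proposition on induced maps of minimal spectra;
\item surjectivity as a general fact about arbitrary ring extensions (Kaplansky's exercises).
\end{itemize}

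Your coefficientwise argument amounts to a hands-on proof of the going-down consequence for this particular free extension. It uses only two facts. First, $G$ is an $R$-basis of $RG$, so $\beta r^n=0$ forces $b_g r^n=0$ for every $g$. Second, $\beta\notin P$ forces some $b_g\notin P$. What this buys is transparency and independence from the flatness machinery. The paper's citation is more general, since it works verbatim for any flat extension.

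Your continuity computation is the standard one. Your surjectivity argument is exactly the general ring-extension argument the paper cites. The detour through $\mathfrak pG$ is unnecessary, though: $S=R\setminus\mathfrak p$ is a multiplicative subset of $RG$ not containing $0$, so the zero ideal already misses $S$. A prime $Q$ of $RG$ with $Q\cap S=\emptyset$ therefore exists directly.

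Reducedness of $R$ is not actually needed in your well-definedness step. From $b_g r^n=0$ with $b_g\notin P\cap R$, the general characterization of minimal primes already gives minimality of $P\cap R$. The paper's flatness argument likewise does not use reducedness there. Your remark that $(b_g)^m r^m=0$ for all $m$ is superfluous.
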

\vspace{.1in}

\begin{proof} \underline{{\it Lemma \ref{Lem2}.}} We define $\psi:\Min(RG)\ra \Min(R)$ by $\psi(Q)=Q\cap R$. Since the embedding of $R$ into $RG$ is a flat morphism it is known that this map is well-defined (see \cite{picavet}). That the map is continuous with respect to the respective hull-kernel topologies follows from \cite[Proposition 1.11]{bdm}.

That the map is surjective is true for any ring extension (see the exercises of Section 1.6 of \cite{kaplansky}).
\end{proof}
\vspace{.1in}

And now are in position to prove Theorem \ref{Thm2}.
\vspace{.1in}

\begin{proof}  \underline{{\it Theorem \ref{Thm2}.}}
Suppose $G$ is not torsion and let $g\in G$ be a torsion-free element. We first suppose that $R$ is reduced with $\Min(R)$ is compact, and that $o(G)\subseteq \reg(R)$. The authors \cite{gs} take care of the case when $G$ is torsion-free. Their proof can be modified to show that if for each finitely generated subgroup $H$ of $G$ there is another finitely generated subgroup $H\subseteq H'$ of $G$ such that $RH'$ is complemented, then $RG$ is complemented. Thus, if $H$ is a finitely generated subgroup of $G$, expand $H$ to a finitely generated subgroup $H'$ of $G$ which contains a torsion-free element. Then we may write $H'=H_1H_2$ where $H_1$ is a torsion-free group and $H_2$ is a torsion group. Since $RH'\cong (RH_1)H_2$, the hypothesis implies that $RH_1$ is complemented, and subsequently $(RH_1)H_2$ is complemented.

Conversely, suppose that $RG$ is complemented. Then certainly $o(G)\subseteq \reg(R)$ since a complemented ring is reduced. By Lemma \ref{Lem2}, $\Min(R)$ is a continuous image of $\Min(RG)$, the latter of which is compact. Therefore, $\Min(R)$ is compact.
\end{proof}

\begin{example}
Quentel's Example $Q$ is a well-known example of a commutative ring that has compact minimal prime spectrum but is not complemented See \cite[Section 27]{huckaba}). Furthermore, if one so wishes, one can take $Q$ to have characteristic 0, so that $o(G)\subseteq \reg(Q)$ is automatically satisfied and thus for any non-torsion group $G$, $RG$ is complemented.

\end{example}

\vspace{.2in}

\section{Property $D$}

\begin{lemma}\label{fg}
Let $R$ be a commutative ring and $G$ an abelian group. Then $RG$ has Property $D$ if and only if for every finitely generated subgroup $H$ of $G$, $RH$ has Property $D$.
\end{lemma}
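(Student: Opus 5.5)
The key observation is that both regularity and nilpotency in $RG$ can be tested inside a group ring over a finitely generated subgroup. The plan therefore rests on two transfer facts for an element $\alpha\in RH\subseteq RG$, with $H$ a subgroup of $G$:

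(i) $\alpha\in\fN(RH)$ if and only if $\alpha\in\fN(RG)$;

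(ii) $\alpha\in\reg(RH)$ if and only if $\alpha\in\reg(RG)$.

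Fact (i) is immediate, since $RH$ is a subring of $RG$ and nilpotency only involves powers of $\alpha$, which stay in $RH$.

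For (ii), one direction is also immediate: if $\alpha$ is regular in $RG$, then it is regular in the subring $RH$. For the converse I would use the coset decomposition. Write $RG=\bigoplus_{x\in T} (RH)x$, where $T$ is a transversal of $H$ in $G$; this is a decomposition of $RG$ as a free $RH$-module. Suppose $\alpha\beta=0$ with $\beta=\sum_{x\in T}\beta_x x$ and each $\beta_x\in RH$. Multiplication by $\alpha$ preserves each summand $(RH)x$, so we get $\alpha\beta_x=0$ for every $x$. Regularity of $\alpha$ in $RH$ then forces every $\beta_x=0$, hence $\beta=0$. This is the one step needing care, but it is standard.

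($\Rightarrow$) Assume $RG$ has Property $D$, let $H$ be a finitely generated subgroup of $G$, and let $\alpha\in RH$. By Property $D$ for $RG$, $\alpha$ is either regular or nilpotent in $RG$. By (ii) it is then regular in $RH$, or by (i) it is nilpotent in $RH$. Thus $RH=\reg(RH)\cup\fN(RH)$. In fact this direction works for any subgroup $H$, not just finitely generated ones.

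($\Leftarrow$) Assume $RH$ has Property $D$ for every finitely generated subgroup $H$, and let $\alpha\in RG$. Take $H$ to be the subgroup generated by the support of $\alpha$. Then $H$ is finitely generated and $\alpha\in RH$. By hypothesis $\alpha$ is regular or nilpotent in $RH$, and (ii) or (i) respectively transfers this to $RG$. Hence $RG$ has Property $D$.

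The only nontrivial ingredient is the freeness argument in (ii), namely that regularity passes from $RH$ up to $RG$. Everything else is bookkeeping.
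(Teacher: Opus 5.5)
Your proof is correct. The necessity direction matches the paper's, since Property $D$ passes to subrings by exactly the trivial halves of your (i) and (ii). The sufficiency direction takes a slightly different route.

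The paper never needs the nontrivial half of (ii). Given $\alpha\notin\fN(RG)$ and $\beta$ with $\alpha\beta=0$, it takes $H$ to be generated by the supports of \emph{both} $\alpha$ and $\beta$. The relation $\alpha\beta=0$ then already lives in $RH$, and Property $D$ of $RH$ forces $\beta=0$ directly.

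You instead fix $H$ from the support of $\alpha$ alone. You then lift regularity from $RH$ to $RG$ via the decomposition $RG=\bigoplus_{x\in T}(RH)x$ over a transversal $T$. This is valid, because $RG$ is a free $RH$-module and multiplication by $\alpha$ preserves each summand.

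The paper's version is shorter and purely elementwise. Yours costs the coset bookkeeping, but it proves the stronger, reusable fact that $\reg(RH)=\reg(RG)\cap RH$ for every subgroup $H$, finitely generated or not. So the ``only nontrivial ingredient'' you identify is in fact avoidable by enlarging $H$ to cover the annihilating element as well.
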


\begin{proof} The necessity follows from the fact that every subring of a ring with Property $D$ also has Property $D$. For the sufficiency, let $\alpha \in RG \ssm \fN(RG)$, and suppose that $\alpha \beta = 0$ for $\beta\in RG$. There exists a finitely generated
subgroup $H$ of $G$ such that both $\alpha,\beta\in \in RH$. As $\alpha\notin \fN(RH)$, it follows that $\alpha\in\reg(RH)$, whence $\beta =0$. Thus, $\alpha\in\reg(RG)$.
\end{proof}

\begin{lemma}\label{tf}
Let $R$ be a commutative ring and suppose that $G$ is a torsion-free abelian group.
Then $RG$ has Property $D$ if and only if $R$ has Property $D$.
\end{lemma}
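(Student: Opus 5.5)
Necessity is immediate: $R$ is a subring of $RG$, and, as noted in the proof of Lemma \ref{fg}, Property $D$ passes to subrings. Concretely, if $r\in R$ is not nilpotent, then it is not nilpotent in $RG$. So $r\in\reg(RG)$, and in particular no nonzero $s\in R\subseteq RG$ satisfies $rs=0$.

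For sufficiency, assume $R$ has Property $D$ and let $G$ be torsion-free. By Lemma \ref{fg} we may assume $G$ is finitely generated, hence $G\cong\Z^n$. Then $RG$ is a Laurent polynomial ring $R[x_1^{\pm1},\dots,x_n^{\pm1}]$, a localization of $R[x_1,\dots,x_n]$ at a multiplicative set of monomials, which are regular. Since $G$ is torsion-free, $\supp(G)=\emptyset$, and the theorem of Karpilovsky gives $\fN(RG)=\fN(R)G$. In other words, $\alpha=\sum r_gg$ is nilpotent exactly when every coefficient $r_g$ is nilpotent.

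Let $\alpha\in RG\ssm\fN(RG)$ and suppose $\alpha$ is a zero-divisor. By the application of McCoy's Theorem noted in the introduction, there is a nonzero $r\in R$ with $r\alpha=0$, that is, $rr_g=0$ for every $g$. Some coefficient $r_g$ is not nilpotent, so by Property $D$ of $R$ it is regular. Then $rr_g=0$ forces $r=0$, a contradiction. Hence $\alpha\in\reg(RG)$, and $RG=\reg(RG)\cup\fN(RG)$.

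The only step requiring care is the McCoy application for Laurent polynomial rings in several variables. If needed, we justify it by multiplying by a monomial to move $\alpha$ and its annihilator into $R[x_1,\dots,x_n]$. McCoy's Theorem for finitely many variables then holds by the standard induction, or by the usual degree argument with a monomial order. The nilradical description is supplied by the cited Karpilovsky theorem, so no further obstacle remains.
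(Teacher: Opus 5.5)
Your proof is correct and follows the paper's argument. Necessity comes from passing to the subring $R$; for sufficiency, a non-nilpotent coefficient of $\alpha$ is regular in $R$, which contradicts the nonzero annihilator $r\in R$ produced by McCoy's Theorem. Your reduction to $G\cong\Z^n$ via Lemma \ref{fg} and the monomial-shift justification of McCoy only make explicit what the paper asserts in its introduction, and you need just the trivial inclusion $\fN(R)G\subseteq\fN(RG)$ rather than the full Karpilovsky formula.
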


\begin{proof}
If $RG$ has Property $D$, then so does every subring, in particular, so does $R$.
\vspace{.1in}

Suppose that $R$ has Property $D$, and suppose that $\alpha\in RG\ssm \fN(RG)$. Writing $\alpha =\sum_{i=1}^n a_ig_i$, it follows that for some $i=1,\cdots n$, $a_i\notin \fN(R)$. If $\alpha\notin\reg(RG)$, then since $G$ is torsion-free, we can apply McCoy's Theorem and obtain some non-zero $r\in R$ such that $r\alpha=0$. But then $ra_i=0$ from which we would conclude that $r=0$. Thus, $\alpha\in \reg(RG)$.

\end{proof}

\begin{lemma}\label{cyclic}
Let $R$ be a commutative ring and $G=C_{p^n}=<g>$ where $p$ is a prime and $n\geq 1$.
Then $RG$ has  Property $D$ if and only if $R$ has Property $D$ and $p \in\fN(R)$.
\end{lemma}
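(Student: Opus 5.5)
We prove the two directions separately. Sufficiency goes through the augmentation map. Necessity exhibits the element $1-g$ as a zero-divisor that is not nilpotent when $p\notin\fN(R)$.

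\emph{Sufficiency.} Assume $R$ has Property $D$ and $p\in\fN(R)$.

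First we show that $g-1$ is nilpotent. By the binomial theorem,
$$(g-1)^{p^n}=g^{p^n}-1+p\,\gamma=p\,\gamma \quad\text{for some }\gamma\in RG,$$
since $p$ divides every middle binomial coefficient $\binom{p^n}{k}$ and the sign of the last term is absorbed for $p$ odd (or into $p\gamma$ when $p=2$). As $p$ is nilpotent, $g-1\in\fN(RG)$. (Alternatively, one can quote Karpilovsky's description of $\fN(RG)$: here $\fN_p(R)=R$, so $r(g-1)\in\fN(RG)$ for all $r\in R$.)

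Hence the augmentation ideal $I=\ker(\varepsilon)$ is contained in $\fN(RG)$. Here $\varepsilon:RG\to R$, $\sum r_hh\mapsto\sum r_h$, and $I$ is generated by $g-1$. Now let $\alpha\in RG\ssm\fN(RG)$ and write $\alpha=a+\nu$ with $a=\varepsilon(\alpha)\in R$ and $\nu\in I$. Then $\nu$ is nilpotent, and $a\notin\fN(R)$, since otherwise $\alpha\in\fN(RG)$. By Property $D$ for $R$, $a\in\reg(R)$. Comparing coefficients shows $a$ is then regular in $RG$.

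It remains to check that a regular element plus a nilpotent element is regular. Suppose $\alpha\beta=0$ and $\nu^m=0$. Then $a\beta=-\nu\beta$, and by induction $a^k\beta=(-\nu)^k\beta$. Taking $k=m$ gives $a^m\beta=0$, so $\beta=0$. Thus $\alpha\in\reg(RG)$, and $RG$ has Property $D$.

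\emph{Necessity.} Suppose $RG$ has Property $D$. Then $R$ has Property $D$ as a subring.

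Suppose, for contradiction, that $p\notin\fN(R)$. Then there is a prime ideal $P$ of $R$ with $p\notin P$. Let $K$ be an algebraic closure of the fraction field of $R/P$. Since $\operatorname{char}K\neq p$, $K$ contains a primitive $p^n$-th root of unity $\zeta\neq 1$.

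The ring homomorphism $RG\to K$ extending $R\to R/P\subseteq K$ and sending $g\mapsto\zeta$ maps $1-g$ to $1-\zeta\neq0$ in a field. Hence $1-g$ is not nilpotent. On the other hand,
$$(1-g)(1+g+\cdots+g^{p^n-1})=1-g^{p^n}=0,$$
and the second factor is nonzero because its coefficients are all $1$. So $1-g$ is a non-nilpotent zero-divisor, contradicting Property $D$. Therefore $p\in\fN(R)$.

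The main obstacle is showing that $1-g\notin\fN(RG)$ when $p$ is not nilpotent. The homomorphism to a field through a primitive root of unity handles this cleanly, and it avoids having to unpack Karpilovsky's formula for $\fN(RG)$.
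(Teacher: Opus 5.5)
Your proof is correct. The sufficiency direction follows essentially the same route as the paper. The paper writes $\alpha=-(p-1)\varepsilon(\alpha)+\sum a_i(p-1+g^i)$ and shows each $p-1+g^i$ is nilpotent. Since $p-1+g^i=(g^i-1)+p$ with $p$ nilpotent, and $1-p$ is a unit, this is your decomposition $\alpha=\varepsilon(\alpha)+\nu$ in different clothing. You also prove that regular plus nilpotent is regular, which the paper takes for granted.

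The necessity direction is genuinely different. The paper passes to the subgroup $\langle h\rangle$ of order $p$ and uses the norm element $N=1+h+\cdots+h^{p-1}$. Since $N(1-h)=0$, Property $D$ forces $N$ to be nilpotent. If $p\notin\fN(R)$, Property $D$ also makes $p$ regular, and then the identity $N^k=p^{k-1}N$ gives $N=0$, a contradiction. You instead work with the other factor, $1-g$. You show it is not nilpotent by mapping $RG$ to an algebraically closed residue field of characteristic $\neq p$, sending $g\mapsto\zeta\neq 1$.

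The paper's route is purely computational: a polynomial identity plus comparison of coefficients. It needs no prime ideals avoiding $p$, no algebraic closures, and no roots of unity, and hence no Zorn-type existence results. Your route is more conceptual. It exhibits $1-g$ directly as a non-nilpotent zero-divisor whenever $p\notin\fN(R)$, invoking Property $D$ only once. Because the same element $g-1$ appears in both directions, your proof also makes the structure of the lemma transparent. The augmentation ideal is nil exactly when $p\in\fN(R)$, so Property $D$ for $RG$ amounts to Property $D$ for $R$ plus nilness of the augmentation ideal.
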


\begin{proof}
\noindent $\Rightarrow$ Let $h \in G$ be of order $p$ and $H=<h>$. Set $T = RH$. As mentioned before, both $R$ and $T$ have Property $D$.
If $p\in R$ is not nilpotent, then $p \in T$ is not nilpotent, so $p\in \reg(T)$. As
$$(1+h+\ldots +h_{p-1})(1-h) = 0,$$
it follows that
$1+h+\ldots +h_{p-1} \in \fN(T)$. Hence, for some $k \geq 1$,
$$0 = (1+h+\ldots +h_{p-1})k = p^{k-1}(1+h+\ldots +h_{p-1}).$$

As $p^{k-1}$ is regular, $1+h+\ldots +h_{p-1}= 0$, a contradiction.
\vspace{.1in}

\noindent $\Leftarrow$ We suppose that $R$ has Property $D$ and that $p\in \fN(R)$. We claim that for any $h\in C_{p^n}$, $p-1 + h\in \fN(RC_{p^n})$.

Recall that, since $(p - 1)^{p^n} + 1 \equiv 0 ({\rm mod} p)$, then for each $h\in G$, $(p-1 + h)^{p^n}$ is divisible by $p$ and hence belongs to $ \fN(RG)$. Therefore, so $p-1 + h \in \fN(RC_{p^n})$. Next, let $\alpha\in RC_{p^n}\ssm \fN(RC_{p^n})$ and $\alpha = \sum_{i=0}^{p^n-1} a_ig^i$. But we can also write $\alpha$ as
$$\alpha = -\sum_{0}^{p^n-1} (p-1)a_i + \sum_{0}^{p^n-1} a_i(p-1+g^i)$$
where the right-hand side belongs to $\fN(RC_{p^n}$. Thus, by choice of non-nilpotent $\alpha$, we conclude that
$$-\sum_{0}^{p^n-1} (p-1)a_i$$
is not nilpotent, and thus is regular in $R$, and hence also in $RC_{p^n}$. Therefore, $\alpha$ is the sum of a regular and a nilpotent, whence $\alpha\in \reg(RC_{p^n})$.

\end{proof}

Here is our characterization of when $RG$ has Property $D$.

\begin{theorem}
Let $R$ be a commutative ring and $G$ an abelian group. Then $RG$ has Property $D$ if and only if one of the following holds:

(1) $R$ has Property $D$ and $G$ is torsion-free.

(2) $R$ has Property $D$ and there exists a prime $p$ such that $p\in \fN(R)$ and $t(G)$ is a non-trivial $p$-group.

\end{theorem}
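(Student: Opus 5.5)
By Lemma \ref{fg}, it suffices to decide when $RH$ has Property $D$ for every finitely generated subgroup $H$ of $G$. Each such $H$ decomposes as $H=H_1\times H_2$, with $H_1$ free abelian of finite rank and $H_2=t(H)$ finite. Then $RH\cong (RH_1)H_2$, and the finite group $H_2$ is a product of cyclic groups of prime power order. The plan is to reduce everything to Lemma \ref{tf} and Lemma \ref{cyclic}, applied one factor at a time, using the isomorphism $R(K\times C)\cong (RK)C$.

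\emph{Sufficiency.} In case (1), Lemma \ref{tf} gives the result directly. In case (2), fix a finitely generated $H=H_1\times H_2$. By Lemma \ref{tf}, $RH_1$ has Property $D$. Since $p\in\fN(R)\subseteq \fN(RH_1)$, the prime $p$ is still nilpotent in $RH_1$. Because $t(G)$ is a $p$-group, $H_2\cong C_{p^{n_1}}\times\cdots\times C_{p^{n_k}}$. Applying Lemma \ref{cyclic} $k$ times shows that $(RH_1)H_2$ has Property $D$. At each step the coefficient ring has Property $D$, and $p$ stays nilpotent in it because the nilradical of the coefficient ring is contained in that of the group ring. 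By Lemma \ref{fg}, $RG$ then has Property $D$.

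\emph{Necessity.} Suppose $RG$ has Property $D$. Then $R$, being a subring, has Property $D$. If $G$ is torsion-free we are in case (1). Otherwise, take any element of prime order $q$ in $t(G)$ and let $C$ be the cyclic subgroup it generates. The subring $RC$ has Property $D$, so Lemma \ref{cyclic} gives $q\in\fN(R)$.

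It remains to show that only one prime can occur. This is the main obstacle, because Lemma \ref{cyclic} alone does not rule out two different primes $p\neq q$ both lying in $\supp(G)$. Suppose they did. Then $p,q\in\fN(R)$. Since $\gcd(p,q)=1$, we have $1=ap+bq$ for some integers $a,b$, which would make $1$ nilpotent and hence $R=0$. This contradicts the standing assumption $1\neq 0$. Therefore $\supp(G)=\{p\}$ for a single prime $p$, $t(G)$ is a nontrivial $p$-group, and $p\in\fN(R)$, which is case (2).

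With this arithmetic observation, the necessity direction needs no further use of Lemma \ref{cyclic} beyond the prime-order subgroups. The remaining points to check carefully are the bookkeeping in the iterated application of Lemma \ref{cyclic} and the nilpotence of $p$ in the intermediate coefficient rings.
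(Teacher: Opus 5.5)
Your proof is correct and is essentially the paper's argument. It uses the same reduction to finitely generated subgroups via Lemma~\ref{fg}, and the same iterated use of Lemma~\ref{cyclic} together with Lemma~\ref{tf} for sufficiency; you treat the free part first and then the cyclic $p$-factors, while the paper does the reverse and applies Lemma~\ref{tf} to $(RK_2)K_1$, which makes no difference. For necessity you use the same coprimality argument, $1=ap+bq\in\fN(R)$.
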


\begin{proof}
$\Leftarrow$ If (1) holds, then $RG$ has Property $D$ by Lemma \ref{tf}.

So, suppose that (2) holds. Let $p$ be a prime such that $t(G)$ is a $p$-group. By Lemma \ref{fg}, it suffices to show that $RH$ has Property $D$ for each finitely generated subgroup $H$ of $G$. Let $H$ be a finitely generated subgroup of $G$. Then $H=K_1K_2$, where $K_1$ is torsion-free and $K_2$ is a torsion group. By (1), we may assume without loss of generality that $K_2\neq \{1_G\}$. Then
$K_1\cong \Z^t$ for some $0\leq t\in \N$, and $K_2\cong C_{p^{k_1}}\times \cdots \times C_{p^{k_r}}$.

We point out that $RK_2\cong (RC_{p^{k_1}})(C_{p^{k_2}}\times \cdots \times C_{p^{k_r}})$. By Lemma \ref{cyclic}, $RC_{p^{k_1}}$ has Property $D$ and $p\in \fN(RC_{p^{k_1}})$. Inductively, we get that $RK_2$ has Property $D$ and that $p\in \fN(RK_2)$. It follows then by Lemma \ref{tf}, that $(RK_2)K_1\cong RH$ has Property $D$.
\vspace{.1in}

$\Rightarrow$ We suppose that $RG$ has Property $D$; every subring of $RG$ has Property $D$. We assume that $G$ is not torsion-free and demonstrate that $t(G)$ is a $p$-group for some prime $p$ and that $p\in \fN(R)$. If it is not this means that there are elements $g,h\in t(G)$ such that $o(g)=p,o(h)=q$ are distinct primes. Then $R<g>$ and $R<h>$ both have Property $D$ and so $p,q\in \fN(R)$ by Lemma \ref{cyclic}. Since $p$ and $q$ are relatively prime, we gather that $1\in\fN(R)$, the desired contradiction. It follows that $t(G)$ is a $p$-group for some prime $p$, and further more $p\in\fN(R)$.

\end{proof}

\end{document}